\documentclass[12pt,a4paper]{amsart}

\usepackage{a4wide}

\usepackage{amsmath}
\usepackage{amssymb}
\usepackage{amsthm}
\usepackage[matrix,arrow]{xy}

\newtheorem{Theorem}{Theorem}[section]

\newtheorem{Corollary}[Theorem]{Corollary}
\newtheorem{Proposition}[Theorem]{Proposition}

\newtheorem{Remark}[Theorem]{Remark}

\def \dim{{\mbox {dim}}\,}

\def\V{\mbox{Var}}

\def\Z{{\mathbb Z}}
\def\R\re
\def\V{\bf V}

\def \vr{\varphi}
\def \R{{\mathbb R}}
\def \N{{\mathbb N}}
\def \Q{{\mathbb Q}}

\def \T{{\mathbb T}}

\def \V{{\bf V}}

\def \ga{\gamma}

\def\P{{\mathcal P}}
\def\CZ{{\text{CZ}}}

\begin{document}
\title[Equivariant symplectic homology of Anosov contact structures]{Equivariant symplectic homology\\ of Anosov contact structures }

\author[L. Macarini]{Leonardo Macarini}
\address{Universidade Federal do Rio de Janeiro, Instituto de Matem\'atica,
Cidade Universit\'aria, CEP 21941-909 - Rio de Janeiro - Brazil}
\email{leonardo@impa.br}

\author[G.P. Paternain]{Gabriel P. Paternain}
 \address{ Department of Pure Mathematics and Mathematical Statistics,
University of Cambridge,
Cambridge CB3 0WB, UK}
 \email {g.p.paternain@dpmms.cam.ac.uk}


\begin{abstract}
We show that the differential in positive equivariant symplectic homology or linearized contact homology vanishes for non-degenerate Reeb flows with a continuous invariant Lagrangian subbundle (e.g. Anosov Reeb flows). Several applications are given, including obstructions to the existence of these flows and abundance of periodic orbits for contact forms representing an Anosov contact structure.

\end{abstract}

\maketitle

\section{Introduction}

A flow $\vr_t$ on a closed manifold $M$ is said to be {\it Anosov} if there is a Riemannian metric on $M$, a constant $0<\lambda<1$ and subbundles $E^s$ and $E^u$ of $TM$ such that $TM = E^s \oplus E^u \oplus \text{span}\{X\}$, where $X = \dot\vr_t$, $d\vr_t(E^{s,u}) = E^{s,u}$ for every $t \in \R$ and
$$ \|d\vr_t(x)|_{E^s}\| \leq \lambda^{t}, \hskip 1cm \|d\vr_{-t}(x)|_{E^u}\| \leq \lambda^{t} $$
for every $x \in M$ and $t>0$. It is well known that $E^{s,u}$ are continuous subbundles.
Geodesic flows of metrics of negative sectional curvature are examples of Anosov flows as well as suspensions of Anosov diffeomorphisms. There are several other classes obtained by surgery, but in general
it is an open problem to classify those manifolds which support an Anosov flow, even in
dimension three. As far as we are aware, it is not even known if there
exists an Anosov flow on a simply connected closed manifold.

In this paper we are interested in a special class of Anosov flows, namely those which are also Reeb flows of contact forms on $M$. Examples of these are again geodesic flows of negatively curved manifolds (Riemannian or Finsler), but there are also more exotic examples (in dimension three) obtained by surgery which are not topologically orbit equivalent to algebraic flows, see \cite{FH}.
We shall call a contact structure $\xi$ Anosov if it admits a contact form whose Reeb flow is Anosov. 

It is well known and easy to see from the definition above that the subbundles
$E^{s,u}$ must be contained in $\xi$ and are Lagrangian; this already implies that all odd Chern classes
of $\xi$ satisfy $2c_{2i+1}(\xi)=0$ and $c_{1}(\xi)=0$ if $E^{s,u}$ is orientable. For geodesic flows
a theorem of R. Ma\~n\'e \cite{Ma} asserts that the existence of a continuous invariant Lagrangian subbundle implies the absence of conjugate points and hence every closed geodesic has
index zero. Ma\~n\'e's theorem is inspired by an earlier theorem of
W. Klingenberg \cite{K} (see also \cite{A2}) which states that a manifold
whose geodesic flow is Anosov must be free of conjugate points. The results
in this note should be seen as extensions of these results for geodesic flows to the general setting of Reeb flows.

\section{Results}
Before stating the results we describe precisely our setting. Let $(W^{2n},\omega)$, $n \geq 2$, be a symplectic manifold with contact type boundary, that is, $W$ is a compact symplectic manifold whose boundary has a transverse vector field $X$ pointing outwards and satisfying ${\mathcal L}_X\omega=\omega$. Suppose that $(W^{2n},\omega)$ satisfies $\int_{\mathbb T^2}f^*\omega=0$ for every smooth map $f: \mathbb T^2 \to W$ defined on the 2-torus $\mathbb T^2$. Denote by $M$ the boundary of $W$ and let $\alpha$ be its contact form. We shall assume throughout that the map $\pi_1(M) \rightarrow \pi_1(W)$ induced by the inclusion is injective.  For the sake of simplicity we shall also assume that the first Chern class $c_1(TW)$ vanishes, although this hypothesis most likely can be removed using Novikov rings. Let $a$ be a free homotopy class of loops in $W$.

Now, suppose that  $\alpha$ is non-degenerate. This means that the linearized first return map of every periodic orbit of the Reeb flow of $\alpha$ does not have one as an eigenvalue. Following the work of C. Viterbo \cite{Vit2}, F. Bourgeois and A. Oancea introduced in \cite{BO1} the $S^1$-equivariant symplectic homology $SH_*^{S^1,a}(W)$ as well as its positive part $SH_*^{S^1,+}(W)$ when $a=0$. It is defined as the direct limit of $S^1$-equivariant Floer homologies of admissible Hamiltonians and is an invariant of the contact structure $\xi$ in the following sense. Given another non-degenerate contact form $f\alpha$ on $M$, where $f: M \to \R$ is a positive function, then we can naturally construct from $W$ (using the symplectic completion of $(W,c\omega)$, where $c$ is a suitably chosen positive constant) a new symplectic manifold with contact type boundary $W_f$ such that $\partial W_f$ can be identified with $M$, the contact form on $\partial W_f$ is given by $f\alpha$ and the  $S^1$-equivariant symplectic homologies of $W_f$ and $W$ are isomorphic. In what follows we will use the same notation $SH_*^{S^1,a,+}(W)$ to denote $SH_*^{S^1,a}(W)$ if $a \neq 0$ and $SH_*^{S^1,+}(W)$ if $a=0$.

The Morse-Bott spectral sequence converges to $SH_*^{S^1,a,+}(W)$ and its second page is given by
\begin{equation}
\label{eq:2nd_page}
E_{j,k}^2 = \bigoplus_{\gamma \in \P^a_k(\alpha)} \Q,
\end{equation}
where $\P^a_k(\alpha)$ is the set of good periodic orbits of the Reeb flow of $\alpha$ with free homotopy class $a$ and Conley-Zehnder index equal to $k$, see \cite{FSK}. Recall that a periodic orbit is said to be {\it good} if it is not an even iterate of a simple periodic orbit whose iterates have indices with different parities. Otherwise, it is called bad. In order to define the Conley-Zehnder index, we have to fix a representative $l_a$ of $a$ and a symplectic trivialization of $TW$ along $l_a$. However, the parity of the Conley-Zehnder index does not depend on these choices and hence such an extra structure is immaterial for the purposes of this work, although it will be implicitly assumed.

The differential of equivariant symplectic homology counts rigid equivariant Floer trajectories connecting good periodic orbits. The precise definition of the differential is not essential here and hence we will not go into details about it. We only need to mention that, although $E_{j,k}^2$ involves only the dynamics of the Reeb flow and consequently can be computed in many examples, the differential rests on solutions of a partial differential equation and its computation is, in general, a very difficult task.

A continuous Lagrangian subbundle on $M$ is a continuous isotropic subbundle of maximal dimension in $\xi$. We say that a Lagrangian subbundle is invariant by the Reeb flow if it is invariant by its linearized flow. The main observation of this note states that the existence of a continuous Lagrangian subbundle invariant under the Reeb flow implies the vanishing of the differential. This can be seen as a hyperbolic version of the result in \cite{AM} asserting that the differential in contact homology vanishes for suitable contact forms on good toric contact manifolds.

\begin{Theorem}
\label{mainthm}
If the Reeb flow of $M$ has a continuous invariant Lagrangian subbundle $E$ then every periodic orbit in the same free homotopy class has index with the same parity. In particular,
$$ SH_k^{S^1,a,+}(W) = \bigoplus_{\gamma \in \P^a_k(\alpha)} \Q. $$
Moreover, every contractible closed orbit has even index and if $E$ is orientable then every periodic orbit has even index.
\end{Theorem}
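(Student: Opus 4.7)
The plan is to compute the parity of the Conley--Zehnder index of each Reeb orbit from the linearised return map restricted to $E$, and then identify the resulting sign with a topological invariant of the free homotopy class. Fix a Reeb orbit $\gamma$ of period $T$ in class $a$. Choose a symplectic basis $(e_1,\dots,e_{n-1},f_1,\dots,f_{n-1})$ of $\xi_{\gamma(0)}$ with $\{e_i\}$ spanning $E_{\gamma(0)}$ and $\{f_i\}$ a complementary Lagrangian. Since $\mathrm{Sp}(2n-2)$ is connected, this basis extends to a global symplectic trivialisation of $\xi|_\gamma$ over $S^1$; the trivialisation need not send $E_{\gamma(t)}$ to the constant Lagrangian $L_0:=\mathrm{span}\{e_i\}$ for interior $t$, only at the endpoints $t=0,T$ (which are the same point of $M$). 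Because $d\phi_T$ preserves $E$ and $E_{\gamma(T)}=E_{\gamma(0)}$, in this trivialisation the return map preserves $L_0$ and therefore has block upper-triangular form
$$\Psi(T)=\begin{pmatrix} A & B \\ 0 & (A^T)^{-1} \end{pmatrix},\qquad A=d\phi_T|_{E_{\gamma(0)}}.$$
A direct calculation gives $\det(I-\Psi(T))=(-1)^{n-1}(\det A)^{-1}\det(I-A)^2$, and combining with the standard formula $(-1)^{\mu_{CZ}(\gamma)-(n-1)}=\mathrm{sign}\det(I-\Psi(T))$ for paths in $\mathrm{Sp}(2(n-1))$ yields $(-1)^{\mu_{CZ}(\gamma)}=\mathrm{sign}\det d\phi_T|_{E_{\gamma(0)}}$.

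To identify this sign topologically, observe that $t\mapsto d\phi_t$ realises $\gamma^{*}E$ as the quotient $[0,T]\times E_{\gamma(0)}/(0,v)\sim(T,d\phi_T^{-1}v)$, so $\gamma^{*}E$ is orientable as a rank-$(n-1)$ real bundle over $S^1$ if and only if $\det d\phi_T|_E>0$. Equivalently, $\gamma^{*}E$ is orientable if and only if $\langle w_1(E),[\gamma]\rangle=0\in\mathbb Z/2$, giving $(-1)^{\mu_{CZ}(\gamma)}=(-1)^{\langle w_1(E),[\gamma]\rangle}$. The right-hand side depends only on the class of $[\gamma]$ in $H_1(M;\mathbb Z/2)$ and therefore only on the free homotopy class $a$, proving the parity statement. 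Contractible orbits give $[\gamma]=0$ and orientable $E$ gives $w_1(E)=0$, so in both cases every index is even. The isomorphism $SH_k^{S^1,a,+}(W)=\bigoplus_{\gamma\in\mathcal P_k^a(\alpha)}\mathbb Q$ then follows by degeneration of the Bourgeois--Oancea Morse--Bott spectral sequence: any nontrivial differential would connect good orbits whose Conley--Zehnder indices differ by an odd amount, which the parity constraint excludes.

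The main obstacle I foresee is verifying that $\Psi(T)$ is block upper-triangular in a global symplectic trivialisation along $\gamma$ even when $E|_\gamma$ is non-orientable as a real vector bundle, in which case no trivialisation of $\xi|_\gamma$ can turn $E|_\gamma$ into a constant Lagrangian subbundle over the whole $\gamma$; the key point is that only the basepoint matching is needed, since at $t=0$ and $t=T$ we are looking at the same fiber with the same adapted basis. A second technical point is the invocation of the parity formula for $\mu_{CZ}$, whose independence of the chosen trivialisation is guaranteed here by the hypothesis $c_1(TW)=0$.
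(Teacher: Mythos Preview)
Your proof is correct and follows essentially the same route as the paper's: compute $\det(I-P)$ from the block upper-triangular form of the linearised return map in a symplectic basis adapted to $E_{\gamma(0)}$, combine with the sign formula $(-1)^{\mu_{CZ}(\gamma)}=(-1)^{n-1}\mathrm{sign}\det(I-P)$, and then identify the resulting sign with $\langle w_1(E),[\gamma]\rangle\in\Z_2$ to conclude it depends only on the free homotopy class. One small correction to your closing remark: the \emph{parity} of $\mu_{CZ}$ is independent of the trivialisation quite generally (two trivialisations of $\xi|_\gamma$ differ by a loop in $\mathrm{Sp}(2n-2)$, and the index shifts by twice its Maslov number), so the hypothesis $c_1(TW)=0$ is not what is doing the work there---it is only needed to fix the absolute integer value of the index via reference cylinders in $W$.
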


\begin{Remark}
{\rm Under the isomorphism between equivariant symplectic homology and linearized contact homology established in \cite{BO2}, the theorem above implies the vanishing of the differential in linearized contact homology whenever the Reeb flow has a continuous invariant Lagrangian subbundle.}
\end{Remark}

\begin{Remark}
{\rm A continuous Lagrangian subbundle $E$ is orientable if and only its first Stiefel-Whitney class $w_1(E) \in H^1(M,\Z_2)$ vanishes. In particular, this condition is automatically satisfied if M is simply connected or, more generally, if $H^1(M,\Z_2)=0$.}
\end{Remark}

Now, let us state some applications of Theorem \ref{mainthm}. The first ones give obstructions to the existence of invariant Lagrangian subbundles for Reeb flows and will be proved in Section \ref{proof:obstructions}. Define $SH_*^{S^1,+}(W) = \oplus_a SH_*^{S^1,a,+}(W)$, where the sum runs over all the free homotopy classes $a$ of $W$.

\begin{Theorem}
\label{obstruction1}
The Reeb flow of a non-degenerate tight contact form on $S^3$ admits no continuous invariant Lagrangian subbundle.
\end{Theorem}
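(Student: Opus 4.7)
The plan is to derive a contradiction by combining Theorem \ref{mainthm} with the standard computation of the positive $S^1$-equivariant symplectic homology of the ball $B^4\subset\C^2$. Suppose for contradiction that some non-degenerate tight contact form $\alpha$ on $S^3$ has a Reeb flow admitting a continuous invariant Lagrangian subbundle. Since $\pi_1(S^3)=0$, every closed Reeb orbit is contractible, so only the trivial free homotopy class $a=0$ is relevant; moreover $H^1(S^3;\Z_2)=0$, so by the Remark following Theorem \ref{mainthm} the subbundle is automatically orientable.

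Next I would apply Theorem \ref{mainthm} to the filling $W$ built from the standard symplectic ball. Either conclusion (contractibility of every orbit, or orientability of the subbundle) forces every periodic orbit of $\alpha$ to have even Conley--Zehnder index, so the formula in Theorem \ref{mainthm} gives
\begin{equation*}
SH_k^{S^1,+}(W) \;=\; \bigoplus_{\gamma\in\P_k(\alpha)}\Q \;=\; 0 \qquad \text{for every odd } k.
\end{equation*}
Here I use that $(B^4,\omega_{\text{std}})$ satisfies the standing hypotheses (simply connected boundary, exact $\omega$, $c_1(TB^4)=0$), and, crucially, that by Eliashberg's classification the tight contact structure on $S^3$ is unique up to contactomorphism and is filled by $B^4$, so the contact invariance of positive $S^1$-equivariant symplectic homology gives $SH_*^{S^1,+}(W)\cong SH_*^{S^1,+}(B^4)$.

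To contradict this vanishing I would invoke the standard computation that $SH_k^{S^1,+}(B^4)\cong\Q$ for every odd $k\geq 1$ (and vanishes otherwise). This can be verified by replacing the degenerate Hopf flow on $S^3$ by the Reeb flow of a non-degenerate irrational ellipsoid in $\C^2$: such a flow has exactly two simple Reeb orbits, and the Conley--Zehnder indices of their iterates take the form $2\lfloor k\omega\rfloor+1$ with $\omega$ irrational, hence are all odd. Running the Morse--Bott spectral sequence then yields the claimed one-dimensional homology in each positive odd degree; in particular $SH_1^{S^1,+}(B^4)\neq 0$, which contradicts the vanishing established above. I expect the only technical point worth checking carefully to be this index computation for the ellipsoid, which is by now a textbook calculation.
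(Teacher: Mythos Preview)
Your argument is correct and follows essentially the same route as the paper: assume an invariant Lagrangian subbundle exists, use simple connectivity of $S^3$ (so orbits are contractible and/or $E$ is orientable) together with Theorem~\ref{mainthm} to force all Conley--Zehnder indices to be even, and then contradict the known computation of $SH_*^{S^1,+}(D^4)$, which is supported in odd degrees. The only cosmetic differences are that you spell out Eliashberg's uniqueness of the tight structure and the ellipsoid computation explicitly, whereas the paper simply quotes the result for $D^4$; also, be slightly careful with the degree in which you locate the contradiction, since the minimal odd degree carrying homology depends on the grading convention (the paper's $k\in\N$ is ambiguous on this point), though of course any nonzero odd-degree class suffices.
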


\begin{Theorem}
\label{obstruction2}
Suppose that $SH_*^{S^1,+}(W)$ is uniformly bounded, that is, there exists a constant $C>0$ such that $\mbox{\rm dim}\, SH_k^{S^1,+}(W) < C$ for every $k$. Then there is no non-degenerate contact form representing $\xi$ whose Reeb flow has an orientable continuous invariant Lagrangian subbundle and infinitely many simple hyperbolic periodic orbits.
\end{Theorem}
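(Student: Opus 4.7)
The plan is to argue by contradiction. Suppose $f\alpha$ is a non-degenerate contact form representing $\xi$ whose Reeb flow has an orientable continuous invariant Lagrangian subbundle $E$ and infinitely many simple hyperbolic periodic orbits $\{\gamma_i\}_{i\ge 1}$. Applying Theorem~\ref{mainthm} to the filling $W_f$ (which shares its equivariant symplectic homology with $W$), the differential in $SH^{S^1,+}_*$ vanishes and every closed Reeb orbit has even Conley--Zehnder index. Since all indices are even, no iterate of any simple orbit is bad, so $\dim SH^{S^1,+}_k(W)$ is exactly the total number of closed Reeb orbits of index $k$ across all free homotopy classes. The goal reduces to producing a single degree $k$ where this count exceeds $C$.

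For each $i$, set $\hat m_i := \hat\mu(\gamma_i)$. I would use two classical index-iteration facts: for a hyperbolic orbit the mean index $\hat m_i$ is an integer (all Floquet multipliers are real), and $|\mu(\gamma_i^l) - l\,\hat m_i| \le n-1$ for every $l \ge 1$. The argument then splits on whether $\{|\hat m_i|\}$ is bounded.

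If $\{|\hat m_i|\}$ is bounded then $\{\mu(\gamma_i)\}$ is a bounded set of even integers, so by pigeonhole some $m$ equals $\mu(\gamma_i)$ for infinitely many $i$; these distinct simple orbits contribute infinitely many generators to $SH^{S^1,+}_m(W)$, contradicting the bound $C$. Otherwise, passing to a subsequence with $\hat m_i \to +\infty$, I fix a large $N$ and set $L := \hat m_1 \hat m_2 \cdots \hat m_N$ and $l_i := L/\hat m_i \in \Z_{>0}$. The iterates $\gamma_1^{l_1},\dots,\gamma_N^{l_N}$ are pairwise distinct closed orbits, each with mean index exactly $L$, hence with Conley--Zehnder index in the window $[L-n+1,\,L+n-1]$. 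Orientability forces each $\mu(\gamma_i^{l_i})$ to be even, and this window contains at most $n$ even integers, so pigeonhole yields an even $k$ carrying at least $N/n$ of the iterates, giving $\dim SH^{S^1,+}_k(W) \ge N/n$. Taking $N > nC$ produces the contradiction.

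The main obstacle will be the careful invocation of the index-iteration estimate $|\mu(\gamma^l) - l\hat\mu(\gamma)| \le n-1$ together with the integrality of the mean index for hyperbolic orbits; these are classical but must be applied consistently with the trivializations used to define $\mu$ in each free homotopy class, though any trivialization-dependent shift is uniform in $l$ and does not affect the pigeonhole count. A minor but necessary check is that distinct simple orbits yield distinct Floer generators after iteration, which holds since the underlying simple orbit and its multiplicity are intrinsic invariants of a closed Reeb orbit.
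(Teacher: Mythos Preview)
Your argument is correct, but it takes a longer route than the paper's. The paper uses the exact iteration identity for hyperbolic orbits, $\mu_{\CZ}(\gamma^j)=j\,\mu_{\CZ}(\gamma)$ (equation~\eqref{hypind}), which in particular says the mean index \emph{equals} the Conley--Zehnder index rather than merely approximating it to within $n-1$. With this in hand one simply chooses $N+1$ distinct simple hyperbolic orbits, sets $k=\operatorname{lcm}\{\mu_{\CZ}(\gamma_1),\dots,\mu_{\CZ}(\gamma_{N+1})\}$, and observes that each iterate $\gamma_i^{\,k/\mu_{\CZ}(\gamma_i)}$ lands \emph{exactly} in degree $k$; orientability of $E$ makes all of them good, yielding $N+1$ generators in a single degree. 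Your bounded/unbounded case split and the windowed pigeonhole (with loss factor $n$) arise only because you invoke the general estimate $|\mu(\gamma^l)-l\hat\mu(\gamma)|\le n-1$ instead of the sharp equality available for hyperbolic orbits. One small advantage of your version is that it handles orbits with $\mu_{\CZ}(\gamma_i)=0$ cleanly via the bounded case, whereas the $\operatorname{lcm}$ in the paper's argument tacitly assumes the indices are nonzero (a harmless omission: if infinitely many simple orbits had index $0$, degree $0$ would already be infinite-dimensional). A minor slip: ``hyperbolic'' means the Floquet multipliers lie off the unit circle, not that they are real; the integrality of the mean index you use remains valid regardless.
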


It is well known that Anosov flows have infinitely many simple periodic orbits, see, for instance, \cite{HK}. Therefore,  we have the following corollary of Theorem \ref{obstruction2}.

\begin{Corollary}
Suppose that $H^1(M,\Z_2)=0$ and that $SH_*^{S^1,+}(W)$ is uniformly bounded. Then $\xi$ is not Anosov.
\end{Corollary}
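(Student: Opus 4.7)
The plan is a direct reduction to Theorem \ref{obstruction2}. I would argue by contradiction: suppose $\xi$ is Anosov, so there exists a contact form $\alpha$ representing $\xi$ whose Reeb flow $\vr_t$ is Anosov, with continuous invariant splitting $TM = E^s \oplus E^u \oplus \text{span}\{X\}$.

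The first task is to verify that this $\alpha$ already satisfies the non-degeneracy hypothesis needed to apply Theorem \ref{obstruction2}. I would note that every periodic orbit of an Anosov flow is hyperbolic: along a closed orbit the linearized flow preserves the splitting, and on $E^s$ it contracts while on $E^u$ it expands, so the linearized return map (which is the restriction to $\xi = E^s \oplus E^u$) has all eigenvalues off the unit circle. In particular $1$ is not an eigenvalue, so $\alpha$ is non-degenerate. This same observation shows that every simple periodic orbit of $\vr_t$ is hyperbolic in the sense required by Theorem \ref{obstruction2}.

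Next I would assemble the remaining hypotheses of Theorem \ref{obstruction2}. As recalled in the introduction of the excerpt, the stable subbundle $E^s \subset \xi$ is continuous, invariant under the Reeb flow, and Lagrangian with respect to $d\alpha|_\xi$; so it is a continuous invariant Lagrangian subbundle. By the remark following Theorem \ref{mainthm}, orientability of $E^s$ is equivalent to the vanishing of $w_1(E^s) \in H^1(M,\Z_2)$, and this is automatic from the hypothesis $H^1(M,\Z_2)=0$. Finally, I would invoke the cited classical fact \cite{HK} that any Anosov flow on a closed manifold has infinitely many simple closed orbits; combined with hyperbolicity from the previous paragraph, the Reeb flow of $\alpha$ therefore has infinitely many simple hyperbolic periodic orbits.

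With all hypotheses in place and $SH_*^{S^1,+}(W)$ assumed uniformly bounded, Theorem \ref{obstruction2} forbids the existence of such an $\alpha$, contradicting our assumption that $\xi$ is Anosov. There is no real obstacle in this argument beyond being careful that the Anosov hypothesis itself supplies non-degeneracy — avoiding any perturbation step that could in principle destroy the continuous invariant Lagrangian subbundle. The corollary is essentially a bookkeeping consequence of Theorem \ref{obstruction2} together with standard facts about Anosov dynamics.
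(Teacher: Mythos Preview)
Your argument is correct and is precisely the reduction the paper has in mind: the text immediately preceding the corollary simply notes that Anosov flows have infinitely many simple periodic orbits and declares the statement a corollary of Theorem~\ref{obstruction2}. You have merely made explicit the implicit checks (non-degeneracy via hyperbolicity, orientability of $E^s$ from $H^1(M,\Z_2)=0$, and that $E^s\subset\xi$ is the required invariant Lagrangian subbundle), all of which are indeed routine.
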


Given a contact manifold $M$ and an exact symplectic manifold $(V,\lambda)$, an exact contact embedding $f: M \to V$ is an embedding such that $f(M) \subset V$ is bounding and there exists a contact form $\alpha$ on $M$ such that $\alpha - f^*\lambda$ is exact. Following \cite{FSK}, we say that $M$ is index-positive if every periodic orbit of the Reeb flow has positive mean index. Under this assumption, it is proved in \cite{FSK} that if $M$ has a displaceable exact contact embedding into an exact  convex symplectic manifold $V$ then
$$ SH_*^{S^1,+}(W) \simeq H_{*+n-1}(W,M), $$
where $W$ denotes the compact component of $V\setminus M$.

\begin{Corollary}
If a simply connected contact manifold is index positive and admits a displaceable exact contact embedding into an exact  convex symplectic manifold then its contact structure is not Anosov. In particular, the canonical contact form of $S^{2n-1}$  is not Anosov for every $n\geq 2$.
\label{cor:esfera}
\end{Corollary}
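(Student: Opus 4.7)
The plan is to reduce Corollary~\ref{cor:esfera} to the unnumbered corollary immediately following Theorem~\ref{obstruction2}, whose hypotheses are $H^1(M,\Z_2)=0$ and uniform boundedness of $SH_*^{S^1,+}(W)$. The first is immediate from simple connectedness of $M$: it forces $H_1(M,\Z)=0$ and hence $H^1(M,\Z_2)=0$ by the universal coefficient theorem.

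For the uniform boundedness I would invoke the result of \cite{FSK} recalled in the paragraph preceding the corollary: the index-positivity and displaceable exact contact embedding hypotheses together give the isomorphism
$$ SH_*^{S^1,+}(W) \simeq H_{*+n-1}(W,M). $$
Because $W$ is a compact manifold with boundary, the right-hand side has finite total rank and vanishes outside a bounded range of degrees, so $\dim SH_k^{S^1,+}(W)$ is uniformly bounded in $k$. The preceding corollary then yields the first assertion: $\xi$ is not Anosov.

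For the particular case of $S^{2n-1}$ with its canonical contact form, I would take $V=\C^n$ with its standard Liouville form and realise $S^{2n-1}$ as the boundary of the unit ball $B^{2n}$, so that $W=B^{2n}$. The resulting embedding is exact, bounding, and displaceable, the last because any sufficiently large translation moves $B^{2n}$ off itself; the standing hypotheses of Section~2 (vanishing of $c_1$, injectivity of $\pi_1(M)\to\pi_1(W)$, triviality of $[\omega]$ on tori) hold automatically since $W$ is contractible. The sphere is simply connected for $n\geq 2$, and the canonical Reeb flow is the Hopf flow, which is a well-known example of an index-positive flow, with positive mean Conley--Zehnder index on every closed orbit. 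The first assertion therefore applies and proves the claim.

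The substantive content has all been absorbed into Theorem~\ref{obstruction2} and the preceding corollary; the only step here that is not entirely formal is the verification of index-positivity for the Hopf flow, which reduces to a brief explicit computation of the iterated Conley--Zehnder index in a constant symplectic trivialisation along a fibre of the Hopf fibration.
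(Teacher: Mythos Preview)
Your proposal is correct and follows exactly the route the paper intends: the corollary is stated without a separate proof precisely because it is meant to be an immediate consequence of the FSK isomorphism $SH_*^{S^1,+}(W)\simeq H_{*+n-1}(W,M)$ together with the unnumbered corollary after Theorem~\ref{obstruction2}, and you have supplied those details accurately. The verification of the hypotheses for $S^{2n-1}$ (contractible filling, displaceability in $\C^n$, index-positivity of the standard Reeb flow) is also as expected.
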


\begin{Remark}
{\rm As mentioned in \cite{FSK}, it is conceivable that the equivariant symplectic homology of a contact manifold admitting a displaceable exact contact embedding vanishes (the corresponding result for non-equivariant symplectic homology was proved in \cite[Theorem 97]{Rit}). This would enable us to remove the hypothesis of index positivity above.}
\end{Remark}

There is a natural filtration in the complex of positive equivariant symplectic homology given by the action. Given $T \in \R$ we denote the truncated homology by $SH_*^{S^1,+,T}(W)$. Following \cite{Se, McL}, we define the growth rate of $SH_*^{S^1,+}(W)$ as
$$ \Gamma(W) = \limsup_{T\to\infty}\frac{1}{\log T}\log \dim \iota(SH_*^{S^1,+,T}(W)), $$
where $\iota: SH_*^{S^1,+,T}(W) \to SH_*^{S^1,+}(W)$ is the map induced by the inclusion. The argument in \cite[Section 4a]{Se} shows that $\Gamma(W)$ is an invariant of the contact structure. Since this argument will be important in the proof of Theorem \ref{infper} below, we will reproduce it in Section \ref{proof:infpers}.

A classical result due to Bowen \cite{Bow} establishes that if $\vr_t$ is an Anosov flow then
\begin{equation}
\label{bowen}
h_{top}(\vr_t) = \lim_{T\to\infty}\frac{1}{T} \log P_T(\vr_t) > 0,
\end{equation}
where $h_{top}(\vr)$ is the topological entropy of $\vr_t$ and $P_T(\vr_t)$ stands for the number of periodic orbits of $\vr_t$ with period less than or equal to $T$. Note that this period does not have to be the minimal one.

Given a non-degenerate contact form $\alpha$ denote by $P_T(\alpha)$ (resp. $P^g_T(\alpha)$) the number of periodic orbits (resp. {\it good} periodic orbits) of $\alpha$ with period less than or equal to $T$. It is easy to see that $P_T(\alpha) \leq 2P^g_T(\alpha) \leq 2P_T(\alpha)$. Indeed, given a simple periodic orbit $\gamma$, the number of bad iterates of $\gamma$ with action less than or equal to $T$ is at most half of the total number of iterates with action less than or equal to $T$. Therefore,
\begin{equation}
\label{grate_good}
\lim_{T\to\infty} \frac{1}{T} \log P_T(\alpha) = \lim_{T\to\infty} \frac{1}{T} \log P^g_T(\alpha).
\end{equation}
It follows from Bowen's result, equation \eqref{grate_good} and Theorem \ref{mainthm} that if $\xi$ is Anosov then $\Gamma(W) = \infty$. This has several consequences as the next three results show. Recall that a hypersurface in the cotangent bundle of a closed manifold is fiberwise starshaped if its intersection with each fiber is non-empty and starshaped.

\begin{Theorem}
\label{obstruction3}
If $\Gamma(W) < \infty$ then $\xi$ is not Anosov. In particular, there is no fiberwise starshaped hypersurface in $T^*\mathbb T^n$, $n\geq 2$, whose Reeb flow is Anosov. \label{cor:toros}
\end{Theorem}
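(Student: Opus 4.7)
The plan is to prove the first assertion by contrapositive: assume $\xi$ is Anosov, and conclude $\Gamma(W) = \infty$. The second assertion then follows by exhibiting a contact form on any fiberwise starshaped hypersurface in $T^*\T^n$ whose number of good periodic orbits grows only polynomially in the action.

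For the first assertion, let $\alpha$ be an Anosov contact form representing $\xi$. Since the Anosov condition is $C^1$-open for vector fields and non-degeneracy is $C^\infty$-generic among contact forms, we may perturb $\alpha$ to a non-degenerate contact form whose Reeb flow is still Anosov; this does not change $\Gamma(W)$, which is a contact invariant. By Theorem \ref{mainthm} applied to the stable (or unstable) bundle $E^s$, the differential on the second page \eqref{eq:2nd_page} of the Morse--Bott spectral sequence vanishes, so $SH_*^{S^1,+}(W)$ is, as an action-filtered vector space, spanned by the good periodic orbits of $\alpha$. In particular the inclusion induces an injection on homology and
$$ \dim \iota\bigl(SH_*^{S^1,+,T}(W)\bigr) = P^g_T(\alpha). $$
Combining Bowen's estimate \eqref{bowen} with \eqref{grate_good} yields $\log P^g_T(\alpha) \geq hT - \log 2$ for all sufficiently large $T$, where $h = h_{top} > 0$. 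Hence $\Gamma(W) \geq \limsup_{T\to\infty} hT/\log T = \infty$, contradicting the hypothesis $\Gamma(W) < \infty$.

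For the second assertion, let $\Sigma \subset T^*\T^n$ be fiberwise starshaped and let $W$ denote the bounded component of $T^*\T^n \setminus \Sigma$. The Reeb flow on $\Sigma$ is conjugate to the geodesic flow of a Finsler metric $F$ on $\T^n$, which after an arbitrarily small perturbation we may take to be bumpy and hence non-degenerate. Free homotopy classes of loops in $\Sigma$ project onto $\pi_1(\T^n) = \Z^n$, and standard Morse theory on the loop space of $\T^n$ yields, in each class $a$, a uniformly bounded number of closed geodesics of length comparable to $\|a\|$. Consequently $P_T(\alpha) = O\bigl(|\{a \in \Z^n : \|a\| \leq CT\}|\bigr) = O(T^n)$, so $\Gamma(W) \leq n < \infty$. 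The first part of the theorem then shows that $\xi$ is not Anosov.

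The main conceptual obstacle is to ensure that the vanishing of the differential from Theorem \ref{mainthm} respects the action filtration, so that $\iota$ is genuinely injective on homology rather than merely that total dimensions coincide; this should be extracted from the action-filtered construction of the Morse--Bott spectral sequence. A secondary, more computational, obstacle is the polynomial bound on $P_T(\alpha)$ for bumpy Finsler metrics on $\T^n$, which follows from the identification $\pi_1(\T^n) = \Z^n$ together with elementary Morse-theoretic estimates on the free loop space.
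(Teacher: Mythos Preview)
Your argument for the first assertion matches the paper's (it is the content of the sentence ``follows from Theorem~\ref{mainthm}, equation~\eqref{grate_good} and Bowen's result''). One small remark: the perturbation step is unnecessary, since an Anosov flow automatically has all periodic orbits hyperbolic and hence non-degenerate.

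The second assertion, however, has a genuine gap. Your claim that ``standard Morse theory on the loop space of $\T^n$ yields, in each class $a$, a uniformly bounded number of closed geodesics'' is false: Morse theory produces \emph{lower} bounds on the number of critical points (via Morse inequalities), never upper bounds. For an arbitrary bumpy Finsler metric on $\T^n$ there is no reason the number of closed geodesics in a fixed class should be uniformly bounded, nor that $P_T(\alpha)$ should grow polynomially. The invariance of $\Gamma(W)$ only tells you that the rank of $\iota$ is contact-invariant, and for a generic form this rank can be much smaller than $P^g_T$ because of cancellation in the differential. The paper avoids this by choosing a \emph{specific} contact form---the one coming from the flat metric---which is Morse--Bott with components diffeomorphic to $\T^n$ and orbit spaces $\T^{n-1}$; Bourgeois's perturbation lemma then gives a non-degenerate $\alpha_T$ whose orbits below action $T$ are in explicit bijection with $\mathrm{Crit}(f)\times C^{MB}_T(\alpha)$, from which the polynomial bound is immediate.

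You also need to address the standing hypothesis that $\pi_1(M)\to\pi_1(W)$ is injective: for $n=2$ the map $\pi_1(S^*\T^2)\to\pi_1(T^*\T^2)$ is $\Z^3\to\Z^2$, which is not injective. The paper disposes of this case separately via the Plante--Thurston theorem (Remark~\ref{dimension3}): a $3$-manifold supporting an Anosov flow has exponentially growing fundamental group, ruling out $S^*\T^2$.
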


\begin{Remark}
\label{dimension3}
{\rm Corollary \ref{cor:esfera} and Theorem \ref{cor:toros}
are only interesting when the contact manifold has dimension $\geq 5$.
This is because it is known that if a 3-manifold $M$ supports an
Anosov flow, then $\pi_{1}(M)$ must grow exponentially \cite{PT}.
In fact, it is also known that an Anosov flow on a 3-manifold cannot have
closed contractible orbits \cite{A}.
In particular an Anosov contact structure on a 3-manifold must be tight
due to Hofer's results in \cite{Ho,HoK}.}
\end{Remark}

\begin{Remark}{\rm A potential application of Theorem \ref{cor:toros} was pointed out to us
by Ivan Smith. In \cite{McL}, M. McLean shows that if the boundary of a Liouville domain is algebraically Stein fillable,
then the growth rate of symplectic homology is finite. If one could show that this finiteness result
implies the analogous result $\Gamma(W)<\infty$ for {\it equivariant} symplectic homology, then
it would follow that if the contact structure of a Liouville domain is Anosov, then it is not algebraically Stein fillable.
This is precisely the situation of unit contangent bundles of manifolds of negative curvature.
In other words, hyperbolic dynamics of the Reeb flow obstructs affine fillings.

}\label{rem:ivan}
\end{Remark}

It follows from \cite[Theorem 1.4]{HM} and the isomorphism between positive equivariant symplectic homology and linearized contact homology established in \cite{BO2} that if $\Gamma(W) > 1$, then every contact form $\alpha$ representing $\xi$ has infinitely many simple periodic orbits. In particular, every contact form (possibly degenerate) representing an Anosov contact structure has infinitely many simple periodic orbits.
This result can be strengthened under the assumption that $\alpha$ is non-degenerate. Given a contact form $\alpha$ denote by $h_{top}(\alpha)$ the topological entropy of the Reeb flow of $\alpha$.

\begin{Theorem}
\label{infper}
Suppose that $\xi$ is Anosov and let $\alpha$ be a contact form representing $\xi$ whose Reeb flow is Anosov. Let $f\alpha$ be a non-degenerate contact form, where $f: M \to \R$ is a positive function. Then
$$  \liminf_{T\to\infty}\frac{1}{T} \log P_T(f\alpha) \geq \frac{h_{top}(\alpha)}{\max f} > 0. $$
Moreover, if $f\alpha$ admits a continuous invariant Lagrangian subbundle then
$$ \frac{h_{top}(\alpha)}{\min f} \geq \liminf_{T\to\infty}\frac{1}{T} \log P_T(f\alpha) \geq \frac{h_{top}(\alpha)}{\max f}. $$
\end{Theorem}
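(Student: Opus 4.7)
The plan is to combine three ingredients: Bowen's exponential growth \eqref{bowen} for $P_T(\alpha)$, the vanishing of the differential in equivariant symplectic homology supplied by Theorem \ref{mainthm}, and a filtered comparison of the truncated homologies $SH_*^{S^1,+,T}$ between the contact forms $\alpha$ and $f\alpha$, extracted from the invariance argument for $\Gamma(W)$ that will be reproduced in Section \ref{proof:infpers}.

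The opening observation is that an Anosov Reeb flow is automatically non-degenerate: at a periodic orbit, the linearized first return map preserves the Lagrangian splitting $\xi=E^s\oplus E^u$ and acts hyperbolically, so $1$ cannot be an eigenvalue. Since $E^s$ is a continuous invariant Lagrangian subbundle of $\xi$, Theorem \ref{mainthm} applies to $\alpha$, the differential of $SH_*^{S^1,+}(W,\alpha)$ is identically zero, the truncation maps $\iota$ are injective, and
$$\dim\iota\bigl(SH_*^{S^1,+,T}(W,\alpha)\bigr)=\dim SH_*^{S^1,+,T}(W,\alpha)=P^g_T(\alpha).$$
Combined with Bowen's theorem and \eqref{grate_good}, this yields $\lim_{T\to\infty}\tfrac{1}{T}\log P^g_T(\alpha)=h_{top}(\alpha)>0$. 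Setting $F_-=\min f$ and $F_+=\max f$ and using the tautological scaling identity $SH_*^{S^1,+,T}(W,c\alpha)=SH_*^{S^1,+,T/c}(W,\alpha)$ for constants $c>0$, I would next concatenate the Viterbo/Seidel continuation morphisms associated to the inclusions of the starshaped hypersurfaces corresponding to $F_-\alpha$, $f\alpha$, and $F_+\alpha$ in the symplectization of $W$. Under the canonical isomorphism $SH_*^{S^1,+}(W,\alpha)\cong SH_*^{S^1,+}(W,f\alpha)$ these produce the double inclusion
$$\iota\bigl(SH_*^{S^1,+,T/F_+}(W,\alpha)\bigr)\subseteq\iota\bigl(SH_*^{S^1,+,T}(W,f\alpha)\bigr)\subseteq\iota\bigl(SH_*^{S^1,+,T/F_-}(W,\alpha)\bigr).$$

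For the lower bound, the trivial estimate $P_T(f\alpha)\ge P^g_T(f\alpha)\ge\dim\iota(SH_*^{S^1,+,T}(W,f\alpha))$, combined with the left inclusion above, gives $P_T(f\alpha)\ge P^g_{T/F_+}(\alpha)$; taking $\tfrac{1}{T}\log$ and $\liminf$ produces the first inequality. For the upper bound, the hypothesis that $f\alpha$ has a continuous invariant Lagrangian subbundle lets me apply Theorem \ref{mainthm} to $f\alpha$ as well, so $\dim\iota(SH_*^{S^1,+,T}(W,f\alpha))=P_T^g(f\alpha)$; the right inclusion then gives $P^g_T(f\alpha)\le P^g_{T/F_-}(\alpha)$, and the chain $P_T(f\alpha)\le 2P^g_T(f\alpha)\le 2P_{T/F_-}(\alpha)$ yields the second inequality after taking $\liminf$ and noting that the $\tfrac{\log 2}{T}$ term vanishes.

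The main obstacle is establishing the filtered comparison in the displayed double inclusion at the chain level for $S^1$-equivariant Floer homology, with explicit control by the constants $F_\pm$. This is precisely the content of the invariance argument for $\Gamma(W)$, but one must track action filtrations quantitatively rather than only up to polynomial rescalings; once isolated as a lemma, it packages both inequalities of the theorem with the short arithmetic above.
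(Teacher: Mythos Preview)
Your proposal is correct and follows essentially the same route as the paper: both use Theorem \ref{mainthm} to equate $\dim\iota(SH_*^{S^1,+,T})$ with $P^g_T$ for the form(s) admitting an invariant Lagrangian subbundle, then sandwich $f\alpha$ between $(\min f)\alpha$ and $(\max f)\alpha$ via the Seidel/Viterbo filtered continuation maps to obtain the dimension inequalities, and finally invoke Bowen's identity \eqref{bowen} and the scaling $P_T(c\alpha)=P_{T/c}(\alpha)$. The only cosmetic difference is that you phrase the comparison as inclusions $\iota(SH_*^{S^1,+,T/F_+}(W,\alpha))\subseteq\iota(SH_*^{S^1,+,T}(W,f\alpha))\subseteq\iota(SH_*^{S^1,+,T/F_-}(W,\alpha))$ inside the common direct limit, whereas the paper extracts the same inequalities directly from the commutative ladder diagram \eqref{eq:ladder}; these are equivalent formulations of the same argument.
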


Our last application is about the uniqueness of periodic orbits in a given free homotopy class. It is well known that given a closed Riemannian manifold $N$ with an Anosov geodesic flow, there is a unique closed geodesic in each non-trivial homotopy class of $N$ and no contractible closed geodesic \cite{K}. Thus, we have the following direct corollary of Theorem \ref{mainthm}.

\begin{Corollary}
Let $N$ be a closed manifold that admits a metric with an Anosov geodesic flow. Then every fiberwise starshaped hypersurface in $T^*N$ whose Reeb flow is non-degenerate and admits an orientable continuous invariant Lagrangian subbundle has a unique closed orbit in a given non-trivial homotopy class of $N$ and no contractible closed orbit.
\end{Corollary}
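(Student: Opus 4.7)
I would compute $SH_*^{S^1,a,+}(W)$ in two ways and match them, where $W$ denotes the Liouville filling enclosed by $\Sigma$ inside $T^*N$ (so $W$ deformation retracts to the zero section $N$, identifying free homotopy classes in $W$ with those in $N$). First, apply Theorem~\ref{mainthm} to $\Sigma$ itself: by hypothesis the contact form $\alpha := \lambda|_\Sigma$ is non-degenerate and admits an orientable continuous invariant Lagrangian subbundle, so every closed Reeb orbit of $\alpha$ has even Conley--Zehnder index. In particular no orbit is bad, and Theorem~\ref{mainthm} yields
$$
\dim \bigoplus_k SH_k^{S^1,a,+}(W) \;=\; \#\bigl\{\text{closed Reeb orbits of }\alpha\text{ in class }a\bigr\}.
$$

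Next, I would use that the unit cotangent bundle $S^*_{g_0}N$ of the Anosov metric $g_0$ is another fiberwise starshaped hypersurface in $T^*N$ carrying the same contact structure on $S^*N$, so by contact invariance of $SH_*^{S^1,a,+}$ the same group may be computed using the Anosov contact form $\alpha_0$. Anosov orbits are hyperbolic, hence $\alpha_0$ is non-degenerate, and the weak stable bundle $E^s$ supplies a continuous invariant Lagrangian subbundle (possibly non-orientable). Theorem~\ref{mainthm} applied to $(\alpha_0,E^s)$ therefore gives
$$
SH_k^{S^1,a,+}(W) \;=\; \bigoplus_{\gamma\in\P^a_k(\alpha_0)}\Q,
$$
so the total dimension equals the number of \emph{good} closed geodesics of $g_0$ in class $a$. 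The Klingenberg theorem recalled just before the corollary provides a unique closed geodesic in every non-trivial free homotopy class of $N$ and no contractible closed geodesic.

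Matching the two computations yields at most one closed Reeb orbit of $\alpha$ in each non-trivial class and none at all in the trivial class. To upgrade ``at most one'' to the claimed ``exactly one'', I would rule out badness on the $g_0$ side: Mañé's theorem (cited in the introduction) applied to the Anosov geodesic flow gives absence of conjugate points along every closed geodesic, so every iterate of every closed geodesic has Morse index zero; the standard iteration formula for geodesics without conjugate points then forces the Conley--Zehnder indices of all iterates of a given closed geodesic to have the same parity, hence no iterate is bad and the $\alpha_0$-side dimension is exactly $1$ for $a\neq 0$. The main obstacle is precisely this last parity bookkeeping, since Theorem~\ref{mainthm} only controls parities within a single free homotopy class and a priori allows the Conley--Zehnder index of $\gamma^k$ to switch parity with $k$; it is only by invoking the no-conjugate-points consequence of Mañé's theorem that one can rule this out.
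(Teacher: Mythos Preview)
Your argument is correct and is essentially the one the paper has in mind: compute $SH_*^{S^1,a,+}(W)$ once with the given contact form (orientable invariant Lagrangian $\Rightarrow$ all indices even $\Rightarrow$ all orbits good) and once with the Anosov geodesic flow, then invoke Klingenberg's uniqueness statement quoted just before the corollary. The paper simply calls this a ``direct corollary of Theorem~\ref{mainthm}'' and does not spell out the badness check on the Anosov side; you are right that this check is needed, and your appeal to the no-conjugate-points consequence (Klingenberg/Ma\~n\'e, already recalled in the Introduction: every closed geodesic has index zero) is exactly how it is handled --- with all Conley--Zehnder indices equal to zero, no iterate can be bad, so the Anosov side contributes precisely one generator per non-trivial class and none for the trivial class.
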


\section{Proof of Theorem \ref{mainthm}}
\label{proof:mainthm}

Let $E$ be a continuous Lagrangian subbundle invariant by the Reeb flow. Fix the free homotopy class $a$ of the periodic orbits and denote by $\alpha$ the contact form on $M$. Let $\gamma: S^1 \to M$ be a non-degenerate periodic orbit of $\alpha$ with homotopy class $a$ and fix a trivialization of $\xi$ along $\gamma$. Here $S^1 = \R/T\Z$, where $T$ is the period of $\gamma$. The linearized Reeb flow with this trivialization gives us a symplectic path whose Conley-Zehnder index will be denoted by $\mu_{\CZ}(\gamma)$. Although this number depends on the choice of the trivialization, its parity does not. Denote by $E_\gamma$ the vector bundle over $S^1$ given by the pullback of $E$ by $\gamma$.

\begin{Proposition}
\label{parity}
The Conley-Zehnder index of $\gamma$ is even if and only if $E_\gamma$ is orientable.
\end{Proposition}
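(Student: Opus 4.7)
The plan is to extract a symplectic path from the linearized Reeb flow along $\gamma$, observe that the invariance of $E$ forces its return map to preserve a Lagrangian subspace, and then compare two signs attached to this return map: the parity of the Conley--Zehnder index (encoded by the sign of $\det(I-P)$) and the orientability of $E_\gamma$ (encoded by the sign of $\det(P|_{L_0})$). The main content is that the block triangular shape of $P$ imposed by the invariance forces these two signs to coincide.

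Concretely, I would first fix a symplectic trivialization of $\gamma^*\xi$ over $[0,T]$ and let $\Psi\colon[0,T]\to Sp(2n-2)$ be the resulting path with $\Psi(0)=\id$; set $P:=\Psi(T)$. Let $L_0\subset\R^{2n-2}$ be the image of $E_{\gamma(0)}$ under the trivialization. The invariance of $E$ under the linearized Reeb flow means that $\Psi(t)L_0$ is the image of $E_{\gamma(t)}$ for all $t$, so in particular $P(L_0)=L_0$. Applying $\Psi(t)$ to a fixed basis of $L_0$ produces a continuous frame of $E_\gamma$ over $[0,T]$, and the corresponding section of $\det E_\gamma$ returns to itself at $t=T$ multiplied by $\det(P|_{L_0})$. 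Therefore $E_\gamma$ is orientable as a rank-$(n-1)$ bundle over $S^1$ if and only if $\det(P|_{L_0})>0$.

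For the parity of $\mu_{\CZ}(\gamma)$ I would appeal to the standard Salamon--Zehnder formula $(-1)^{\mu_{\CZ}(\gamma)-(n-1)}=\operatorname{sign}\det(I-P)$, valid for non-degenerate $\gamma$. Choose a Lagrangian complement $L_0'$ of $L_0$; since $P$ preserves $L_0$, in the associated symplectic basis
\[
P=\begin{pmatrix} A & B \\ 0 & (A^T)^{-1}\end{pmatrix},\qquad A:=P|_{L_0},
\]
with $B$ subject to a symmetry constraint that plays no role here. A short calculation, using $\det(I-(A^T)^{-1})=(-1)^{n-1}\det(A)^{-1}\det(I-A)$, gives $\det(I-P)=(-1)^{n-1}\det(A)^{-1}\det(I-A)^2$, and hence $\operatorname{sign}\det(I-P)=(-1)^{n-1}\operatorname{sign}\det(P|_{L_0})$. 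Combined with the Salamon--Zehnder identity this yields $(-1)^{\mu_{\CZ}(\gamma)}=\operatorname{sign}\det(P|_{L_0})$, which together with the criterion from the previous paragraph is exactly the statement.

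The only delicate point is checking that the sign convention in the Salamon--Zehnder parity formula is consistent with the normalization of $\mu_{\CZ}$ used for positive $S^1$-equivariant symplectic homology \`a la Bourgeois--Oancea; once this is pinned down, the remainder of the argument is elementary linear algebra exploiting the block triangular form dictated by the invariance of $L_0$.
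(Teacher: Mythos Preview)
Your proposal is correct and follows essentially the same approach as the paper: both exploit the block-triangular form of $P$ forced by the invariance of the Lagrangian subspace, compute $\det(I-P)=(-1)^{n-1}(\det A)^{-1}\det(I-A)^2$, and combine this with the parity formula $(-1)^{\mu_{\CZ}(\gamma)}=(-1)^{n-1}\operatorname{sign}\det(I-P)$ (which the paper cites from Eliashberg--Givental--Hofer rather than Salamon--Zehnder). Your write-up is a bit more explicit about why $\det(P|_{L_0})>0$ is equivalent to orientability of $E_\gamma$, but the substance is identical.
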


\begin{proof}
Let $P$ denote the linearized Poincar\'e map of $\ga$. We will make use of the following equality:
\begin{equation}
(-1)^{\mu_{CZ}(\ga)}=(-1)^{n-1}\mbox{\rm sign}\,\det (I-P),
\label{eq:CZ}
\end{equation}
see \cite[Section 1.2]{EGH}. Consider a symplectic basis $\{e_{1},\dots,e_{n-1},f_{1},\dots,f_{n-1}\}$ of $\xi(\gamma(0))=\xi(\gamma(T))$ such that $\{e_{1},\dots,e_{n-1}\}$
is a basis of the Lagrangian subspace $E(\gamma(0))=E(\gamma(T))$.
Since $E_\ga$ is invariant $P$ has the matrix form
$$ P=\left(\begin{array}{ll}
A&B\\
0&C\\
\end{array}\right), $$
where $C=(A^{t})^{-1}$ and $A^{-1}B$ is symmetric. Note that $E_\ga$ is orientable along $\ga$ if and only if
$\det(A)>0$. Now we compute
\begin{align*}
\det(I-P)&=\det(I-A)\det(I-C)\\
&=\det(I-A)\det(I-(A^{t})^{-1})\\
&=\det(I-A)\det((A^{t})^{-1})\det(A^{t}-I)\\
&=(\det(A))^{-1}\det(I-A)(-1)^{n-1}\det(I-A^{t})\\
&=(\det(A))^{-1}(\det(I-A))^{2}(-1)^{n-1}.
\end{align*}
Thus the proposition follows from \eqref{eq:CZ}.
\end{proof}

Now, observe that $E_\gamma$ is orientable if and only if its first Stiefel-Whitney class $w_1(E_\gamma) \in H^1(S^1,\Z_2)$ vanishes. But
$$ w_1(E_\gamma) = \gamma^*w_1(E). $$
 Consequently, we conclude that the vanishing of $w_1(E_\gamma)$ depends only on the homology class of $\gamma$. Here we use in an essential way that $E$
is globally defined and continuous.
In particular, if $\bar\gamma$ is freely homotopic to $\gamma$ then $w_1(E_{\bar\gamma})$ vanishes if and only if $w_1(E_\gamma)$ vanishes. It follows from the previous proposition that the two closed
orbits $\gamma$ and $\bar{\gamma}$ must have the same parity and Theorem \ref{mainthm} is proved.
\qed

\section{Proof of Theorems \ref{obstruction1}, \ref{obstruction2} and \ref{obstruction3}}
\label{proof:obstructions}

\subsection{Proof of Theorem \ref{obstruction1}}
Suppose, by contradiction, that there exists a non-degenerate contact form representing $\xi$ whose Reeb flow has a continuous invariant Lagrangian subbundle. 
For the ball $D^4 \subset \R^4$ we have that
\begin{equation}
\label{chsphere}
SH_*^{S^1,+}(D^4) \simeq \begin{cases}
\Q & \text{ if } *=2k+1, k \in \N \\
0 & \text{ otherwise}.
\end{cases}
\end{equation}
By examining the proof of Theorem \ref{mainthm} we see that $E$ must be orientable since $S^3$ is simply connected. Thus
any closed orbit must have even index, which contradicts \eqref{chsphere}.
\qed

One can improve the argument above to get a little more.
It is easy to see that the existence of an invariant Lagrangian subbundle in dimension three implies that every periodic orbit is hyperbolic whenever the contact form is non-degenerate. The index of a hyperbolic periodic orbit $\ga$ satisfies
\begin{equation}
\label{hypind}
\mu_{\CZ}(\gamma^j) = j\mu_{\CZ}(\gamma)
\end{equation}
for every $j \in \N$. Since the differential vanishes, every periodic orbit has index greater than or equal to three. As a matter of fact, if there was a periodic orbit of index less than three then, by equation \eqref{hypind}, we would have a simple, and hence good, periodic orbit of index less than three. This contradicts the fact that the differential vanishes. By \eqref{chsphere} we conclude that given any odd number $k \geq 3$ there is a periodic orbit with index $k$. In particular, given two distinct prime numbers $k_1, k_2 \geq 3$ there are closed orbits $\ga_1$ and $\ga_2$ of index $k_1$ and $k_2$ respectively. Therefore, by equation \eqref{hypind}, $\ga_1$ and $\ga_2$ must be simple and, by \eqref{hypind} and the vanishing of the differential, $\dim SH_{k_1k_2}^{S^1,+}(D^4) \geq 2$ (notice that $\ga^{k_2}_1$ and $\ga^{k_1}_2$ are good since $k_1$ and $k_2$ are odd), contradicting \eqref{chsphere}.
This shows that if a contact (fillable) 3-manifold has the same positive equivariant symplectic homology as $S^3$ tight, then it does not support a non-degenerate Reeb flow with a continuous invariant Lagrangian subbundle.

\subsection{Proof of Theorem \ref{obstruction2}}
Suppose, by contradiction, that there exists an integer $N>0$ such that $\dim SH_k^{S^1,+}(W) < N$ for every $k$ and a contact form representing $\xi$ whose Reeb flow has an orientable continuous invariant Lagrangian subbundle $E$ and infinitely many simple hyperbolic periodic orbits. Let $\gamma_1, ..., \gamma_{N+1}$ be simple distinct hyperbolic periodic orbits and define
$$ k = \text{lcm}\{\mu_{\CZ}(\ga_1),...,\mu_{\CZ}(\ga_{N+1})\}. $$
The hypothesis that $E$ is orientable ensures, by  Theorem \ref{mainthm}, that every periodic orbit is good. Since the differential vanishes, it follows from equation \eqref{hypind} that $SH_{k}^{S^1,+}(W)$ has at least $N+1$ generators, a contradiction.
\qed

\subsection{Proof of Theorem \ref{obstruction3}}
As already mentioned in the introduction, the first assertion follows from Theorem \ref{mainthm}, equation \eqref{grate_good} and Bowen's result. For the second one, notice that the map induced by the inclusion $\pi_1(S^*\T^n) \to \pi_1(T^*\T^n)$ is injective if and only if $n\geq 3$. But by Remark \ref{dimension3} we can restrict ourselves to this case. Recall that a contact form $\alpha$ is Morse-Bott if its action spectrum $\text{Spec}(\alpha)$ is discrete and if for every $T \in \text{Spec}(\alpha)$ we have that $N_T := \{p \in M; \vr_T(p)=p\}$ is a smooth closed submanifold such that $d\alpha|_{N_T}$ is locally constant and $T_pN_T = \ker(d\vr_T - Id)_p$, see \cite{Bo}. Here $\vr_t$ denotes the Reeb flow of $\alpha$. A connected component of $N_T$ is called a Morse-Bott component with period $T$.

Consider the flat metric on the torus $\T^n$ and denote by $\alpha$ the corresponding contact form on $S^*\T^n$. It is well known that $\alpha$ is Morse-Bott and that
$$ \limsup_{T\to\infty} \frac{1}{\log T} \log P^{MB}_T(\alpha) < \infty, $$
where $P^{MB}_T(\alpha)$ is the number of Morse-Bott components of $\alpha$ with period (which does not need to be the minimal one) less than $T$. The Morse-Bott components of $\alpha$ are diffeomorphic to $\T^n$ and the corresponding orbit spaces are smooth manifolds diffeomorphic to $\T^{n-1}$. Fix a Morse function $f$ on $\T^{n-1}$ and denote by $\text{Crit}(f)$ the set of critical points of $f$. Given $T>0$ it follows from Lemma 2.3 in \cite{Bo} that there is a contact form $\alpha_T$ arbitrarily close to $\alpha$ such that every periodic orbit of $\alpha_T$ with action less than $T$ is non-degenerate and these orbits (up to an obvious reparametrization) are in bijection with the set $\text{Crit}(f) \times C^{MB}_T(\alpha)$, where $C^{MB}_T(\alpha)$ is the set of Morse-Bott components of $\alpha$ with period less than $T$, whose cardinality is given by $P^{MB}_T(\alpha)$. Denote by $W_T$ the obvious symplectic filling of $(M,\alpha_T)$. We have then that,
\begin{align*}
\limsup_{T\to\infty} \frac{1}{\log T} \log\dim SH_*^{S^1,+,T}(W) & = \limsup_{T\to\infty}   \frac{1}{\log T} \log\dim SH_*^{S^1,+,T}(W_T) \\
& \leq \limsup_{T\to\infty}  \frac{1}{\log T} \log P_T(\alpha_T) \\
& = \limsup_{T\to\infty} \frac{1}{\log T} \log P^{MB}_T(\alpha) < \infty,
\end{align*}
finishing the proof.
\qed

\section{Proof of Theorem \ref{infper}}
\label{proof:infpers}

First we will reproduce the argument of Seidel in \cite[Section 4a]{Se} that shows the invariance of the growth rate for (non-equivariant) symplectic cohomology under Liouville isomorphisms. The proof for positive equivariant symplectic homology is the same. Let $\alpha$ be the contact form on $M$ and $\alpha^\prime = f\alpha$ be another contact form, where $f: M \to \R$ is a positive function. Denote by $W$ and $W^\prime$ the fillings of $\alpha$ and $\alpha^\prime$ respectively. The key point is that given $D > \max_{x\in M}\{f(x),1/f(x)\}$ then for every $T \in \R$ there are natural maps $\psi_{\alpha\alpha^\prime}: SH_*^{S^1,+,T}(W) \to SH_*^{S^1,+,DT}(W^\prime)$ and $\psi_{\alpha^\prime\alpha}: SH_*^{S^1,+,T}(W^\prime) \to SH_*^{S^1,+,DT}(W)$ that fit into the ladder-shaped commutative diagram
\begin{equation} \label{eq:ladder}
\xymatrix{
\cdots & \cdots \\
\ar[u]
SH_*^{S^1,+,D^4T}(W) \ar[r] & SH_*^{S^1,+,D^5T}(W^\prime) \ar[ul] \ar[u]
\\
\ar[u]
SH_*^{S^1,+,D^2T}(W) \ar[r] & SH_*^{S^1,+,D^3T}(W^\prime) \ar[ul] \ar[u]
\\
\ar[u]
SH_*^{S^1,+,T}(W) \ar[r] & SH_*^{S^1,+,DT}(W^\prime) \ar[ul] \ar[u]
}
\end{equation}
where the maps in the vertical arrows are those induced by the inclusion, see \cite{Se} and \cite{HM}. Now, suppose that $SH_*^{S^1,+}(W) \simeq SH_*^{S^1,+}(W^\prime)$ is infinite-dimensional, since, otherwise, we would have $\Gamma(W) = \Gamma(W^\prime) = 0$. Then,
$$ \Gamma(W)^{-1} = \liminf_{T\to\infty} \frac{\log T}{\log r(W,T)} = \liminf_{T\to\infty} \frac{\log DT}{\log r(W,T)} \geq \liminf_{T\to\infty} \frac{\log DT}{\log r(W^\prime,DT)} = \Gamma(W^\prime)^{-1}, $$
where $r(W,T)$ is the rank of $\iota(SH_*^{S^1,+,T}(W))$. Inverting the roles of $W$ and $W^\prime$ we conclude the desired result.

\subsection{Proof of Theorem \ref{infper}}
Let $\alpha_0 = (\min f)\alpha$, $\alpha_1 = f\alpha$ and $\alpha_2 = (\max f)\alpha$. Denote by $W_0 \subset W_1 \subset W_2$ the corresponding fillings. We will follow Seidel's argument above. Given $D > \max f/\min f$ there are natural maps $\psi_{21}: SH_*^{S^1,+,T}(W_2) \to SH_*^{S^1,+,T}(W_1)$ and $\psi_{12}: SH_*^{S^1,+,T}(W_1) \to SH_*^{S^1,+,DT}(W_2)$ such that the diagram \eqref{eq:ladder} becomes
\begin{equation*}
\xymatrix{
\cdots & \cdots \\
\ar[u]
SH_*^{S^1,+,D^2T}(W_2) \ar[r] & SH_*^{S^1,+,D^2T}(W_1) \ar[ul] \ar[u]
\\
\ar[u]
SH_*^{S^1,+,DT}(W_2) \ar[r] & SH_*^{S^1,+,DT}(W_1) \ar[ul] \ar[u]
\\
\ar[u]
SH_*^{S^1,+,T}(W_2) \ar[r] & SH_*^{S^1,+,T}(W_1) \ar[ul] \ar[u]
}
\end{equation*}
Consequently, since by Theorem \ref{mainthm} $\dim \iota(SH_*^{S^1,+,T}(W_2)) = \dim SH_*^{S^1,+,T}(W_2)$, we deduce that $\dim SH_*^{S^1,+,T}(W_2) \leq \dim SH_*^{S^1,+,T}(W_1)$ and hence
\begin{align*}
 \lim_{T\to\infty}\frac{1}{T} \log P_T((\max f)\alpha) & =  \lim_{T\to\infty}\frac{1}{T} \log P^g_T((\max f)\alpha) \\
& = \lim_{T\to\infty}\frac{1}{T} \log \dim SH_*^{S^1,+,T}(W_2) \\
& \leq \liminf_{T\to\infty}\frac{1}{T} \log \dim SH_*^{S^1,+,T}(W_1) \\
& \leq \liminf_{T\to\infty}\frac{1}{T} \log P_T(f\alpha), 
\end{align*}
where the last inequality follows from \eqref{eq:2nd_page} and the fact that $f\alpha$ is non-degenerate. But, since $P_T(\alpha) = P_{cT}(c\alpha)$ for every positive constant $c$, we have that
$$ \lim_{T\to\infty}\frac{1}{T} \log P_T((\max f)\alpha) = \frac{\lim_{T\to\infty}\frac{1}{T} \log P_T(\alpha)}{\max f} = \frac{h_{top}(\alpha)}{\max f}, $$
where the last equality follows from Bowen's result in equation \eqref{bowen}. Finally, suppose that $f\alpha$ has a continuous invariant Lagrangian subbundle. Arguing as before replacing the roles of $W_2$ and $W_1$ by $W_1$ and $W_0$ respectively, we conclude that
\begin{align*}
\liminf_{T\to\infty}\frac{1}{T} \log P_T(f\alpha) & = \liminf_{T\to\infty}\frac{1}{T} \log P^g_T(f\alpha) \\
& = \liminf_{T\to\infty}\frac{1}{T} \log \dim SH_*^{S^1,+,T}(W_1) \\
& \leq \lim_{T\to\infty}\frac{1}{T} \log \dim SH_*^{S^1,+,T}(W_0) \\
& = \lim_{T\to\infty}\frac{1}{T} \log P^g_T((\min f)\alpha) \\
& = \lim_{T\to\infty}\frac{1}{T} \log P_T((\min f)\alpha) \\
& = \frac{h_{top}(\alpha)}{\min f}, 
\end{align*}
where the second equality follow from Theorem \ref{mainthm} and the fact that $f\alpha$ has a continuous invariant Lagrangian subbundle.
\qed

\medskip

{\it Acknowledgement.} We are grateful to Ivan Smith for pointing out to us Remark \ref{rem:ivan}.

\end{document}